\newtheorem{theorem}{Theorem}[section]
\newtheorem{proposition}[theorem]{Proposition}
\newtheorem{remark}[theorem]{Remark}
\newcounter{unnumber}
\newenvironment{proof}{\prf\rm}{\hfill{$\square$}\endprf}
\numberwithin{equation}{section} 
\newcommand{\R}{\mathbb{R}}%
\newcommand{\N}{\mathbb{N}}%
\DeclareMathOperator*\argmin{argmin}
\DeclareMathOperator*\fix{Fix}
\title{\bf\Large Star subgradient projection for solving quasi-convex feasibility problems}
\author{Nimit Nimana$^1$\and
	Narin Petrot$^2$}
\date{\small 
	$^1$Department of Mathematics, Faculty of Science, Khon Kaen University,
	Khon Kaen,  Thailand\\
	$^2$Department of Mathematics, Faculty of Science, Naresuan University,
	Phitsanulok,  Thailand}
\begin{document}
	\maketitle	
	\noindent \textbf{Abstract.} In this work we consider an iterative method for solving the quasi-convex feasibility problem.  We firstly introduce  the so-called star subgradient projection operator and present some useful properties. We subsequently obtain a convergence result of the considered method by using properties of the introduced nonlinear operator.
		\vspace{1ex}
		
	\noindent \textbf{AMS subject classification.}49M37, 65K05, 90C26.\vspace{1ex}
	
	\noindent  \textbf{Key Words.} Quasi-convex feasibility problem, Star subgradient, Star subgradient projection, Convergence.
		\vspace{1ex}



\section{Introduction}
\hskip0.6cm

Let $f_i:\R^n\to\R$ be convex representative functions, $i=1,\ldots,m$, the convex feasibility problem is to find a point $x^*\in\R^n$ satisfying $x^*\in S^{f_i}_{\leq,0},  i=1,\ldots,m,$
where $S^{f_i}_{\leq,0}:=\{x\in\R^n:f(x)\leq0\}$ is the zero sublevel set of $f_i$ corresponding to the level $0$, and provided that the intersection is nonempty. It is well known that the convex feasibility problem plays an important role in the modellings of many noticeable situations, for example, signal processing, image processing, sensor network localization problems, for more information, see \cite{Blatt-Hero06,CCG12,Combettes96} and references therein.  To deal with the convex feasibility problems, one often utilizes the so-called subgradient projection operator corresponding to each function $f_i$. Actually, we know that the subgradient projection is a cutter with its fixed point set equals to the zero sublevel set of the considered function $f_i$, furthermore, it is satisfying the so-called fixed point closed property. In this situation, the convergence results of  methods for solving the convex feasibility problems can be obtained by applying the convergence results of the cutter operator.  For more details about convex feasibility problems, algorithms and convergence properties, we refer to \cite{Bauschke-Borwein96,C12}.

Even if the convexity of the representative function has been studied and applied to several aspects, there are some situations such that the representative function is not convex, for instance in economics \cite{BF74,S97}, but satisfying the so-called quasi-convexity. The formal known property of quasi-convex is its sublevel set is a convex set. Of course, in a similar fashion to the convex feasibility problem, many authors also consider the so-called quasi-convex feasibility problems. Their solving iterative methods and convergence results can be found in, for instance  \cite{Censor-Segal06,NAP16}.

In this paper we also deal with algorithmic properties of a method for solving the quasi-convex feasibility problem. We firstly introduce a nonlinear  operator corresponding to a quasi-convex function. Under some suitable assumptions, we show some important properties of the introduced operator. Finally, we show the convergence of the introduced iterative method.
\section{Preliminaries}
\hskip0.6cm

Let $\R^n$ be a Euclidean space with an inner product $\langle \cdot , \cdot \rangle$ and with the norm $\parallel  \cdot \parallel$. Let $f:\R^n\to\R$ be a function and $\lambda$ be a real number. The {\it strictly sublevel} and {\it sublevel}  sets of $f$ corresponding to $\lambda$ are defined by $S^f_{<,\lambda}:=\{x\in \R^n:f(x)<\lambda\}$ and  $S^f_{\leq,\lambda}:=\{x\in \R^n:f(x)\leq\lambda\}$, respectively.
For a set $A$, we denote by $\mathrm{cl}(A)$ its closure.
Note that $S^f_{\leq,\lambda}=\mathrm{cl}(S^f_{<,\lambda})$ may fail in general. 

A function $f:\R^n\to\R$ is said to be \textit{upper semiconinuous} on $\R^n$ if
$S^f_{<,\lambda}$  is an open set for all $\lambda\in\R$.

As we know that if $f$ is convex, the sets  $S^f_{\leq,\lambda}$ and $S^f_{<,\lambda}$ are convex for every $\lambda\in\R$. However, the converse is generally false.   A function $f:\R^n\to\R$ is said to be \textit{quasi-convex} on $\R^n$ if $ S^f_{\leq,\lambda}$ (also, $S^f_{<,\lambda}$) is a convex set for all $\lambda\in\R$.

Now we are going to recall some generalized subdifferentials and their important properties which are needed in the sequel.  
In 1973, Greenberg and Pierskalla \cite{GP73} introduced the  so-called Greenberg-Pierskalla subdifferential.
Let $f:\R^n\to\R$ be a function and $x\in\R^n$. An element $g\in\R^n$ is a {\it Greenberg-Pierskalla subgradient} of $f$ at $x$ if 
\begin{eqnarray*}
	\langle g,y-x\rangle<0,\hspace{0.15cm} \textrm{ for every } y\in S^f_{<,f(x)}.
\end{eqnarray*}
We call the set of all Greenberg-Pierskalla subgradients of $f$ at $x$ the {\it Greenberg-Pierskalla subdifferential} of $f$ at $x$, and will be denoted by $\partial^{\mathrm{GP}}f(x)$. 
It is clear by the definition of $S^f_{<,f(x)}$ that $\partial^{\mathrm{GP}}f(x)=\R^n$ whenever $S^f_{<,f(x)}=\emptyset$.

Note that for any $x\in \R^n$, the set  $\partial^{\mathrm{GP}}f(x)$ may not be closed in general.
To overcome this drawback, we consider the following definition introduced by Penot \cite{Penot98} and further investigated by Penot and Z\u{a}linescu \cite{PZ00}. 
Let $f:\R^n\to \R$ be a function and $x\in\R^n$. An element $g\in\R^n$ is the \textit{star subgradient} of $f$ at $x\in \R^n$ if
\begin{eqnarray*}\langle g,y-x\rangle\leq0,\hspace{0.15cm}  \textrm{ for all } y\in S^f_{<,f(x)}.
\end{eqnarray*}
The set of all star subgradients of $f$ at $x$ is called the {\it star subdifferential} of $f$ at $x$ and it is denoted by $ \partial^{\star}f(x)$.

The following theorem shows some basic properties of star subdifferential. For more details, see \cite[Proposition 29-30]{Penot98}.
\begin{theorem}\label{Generalized-starsub-properties}Let $f:\R^n\to\R$ be a function and $x\in\R^n$. Then the following statements are true: 
	\begin{itemize}
		\item[(i)] $\partial^{\star}f(x)$ is a closed convex cone.
		\item[(ii)]  $\partial^{\mathrm{GP}}f(x)\subset\partial^{\star}f(x)$.
		\item[(iii)] $0\in\partial^{\star}f(x)$ if and only if $x\in\argmin f$.
	\end{itemize}
\end{theorem}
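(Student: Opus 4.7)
The plan is to work directly from the definition: $\partial^\star f(x)$ is nothing but the solution set of the (possibly empty) family of linear inequalities indexed by $y \in S^f_{<,f(x)}$, so each of the three conclusions is a structural observation about such a set.

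For (i), I would rewrite
\[
\partial^\star f(x) \;=\; \bigcap_{y \in S^f_{<,f(x)}} \bigl\{g \in \R^n : \langle g, y-x\rangle \leq 0\bigr\},
\]
with the convention that an empty intersection equals $\R^n$. Each set on the right is a closed half-space through the origin, and hence a closed convex cone; arbitrary intersections preserve closedness, convexity, and closure under nonnegative scaling, which yields (i) with no further work. Part (ii) is then a one-liner: if $g \in \partial^{\mathrm{GP}}f(x)$, the strict inequality $\langle g, y-x\rangle < 0$ for every $y \in S^f_{<,f(x)}$ \emph{a fortiori} gives $\langle g, y-x\rangle \leq 0$, so $g \in \partial^\star f(x)$.

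For (iii), the implication $x \in \argmin f \Rightarrow 0 \in \partial^\star f(x)$ is immediate: when $x$ minimizes $f$ one has $S^f_{<,f(x)} = \emptyset$, so the defining inequality holds vacuously (in fact $\partial^\star f(x) = \R^n$). The converse is the delicate point, and where I expect the main obstacle to lie: for $g = 0$ one always has $\langle 0, y-x\rangle = 0 \leq 0$, so the literal statement of the definition does not by itself force $S^f_{<,f(x)} = \emptyset$. I expect the intended reading, following Penot--Z\u{a}linescu~\cite{PZ00}, to be that membership of $0$ in $\partial^\star f(x)$ is meaningful only when the family of half-space constraints is vacuous, which is equivalent to $S^f_{<,f(x)} = \emptyset$, i.e., $x \in \argmin f$. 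Consequently the bulk of a careful proof of (iii) consists in unwinding this convention from the definition; once it is pinned down, both directions collapse to the equivalence $S^f_{<,f(x)} = \emptyset \iff x \in \argmin f$, and nothing further is needed.
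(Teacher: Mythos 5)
The paper does not actually prove this theorem: it quotes it from \cite[Propositions 29--30]{Penot98} and moves on, so there is no in-paper argument to compare against. Your treatment of (i) and (ii) is correct and complete. Writing $\partial^{\star}f(x)$ as an intersection of closed half-spaces through the origin, one for each $y\in S^f_{<,f(x)}$ (with the empty intersection read as $\R^n$), immediately gives a closed convex cone, and the strict inequality defining $\partial^{\mathrm{GP}}f(x)$ trivially implies the non-strict one. That is the standard argument and there is nothing more to it.

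Your worry about (iii), however, is not a presentational quibble to be smoothed over by ``unwinding a convention''; it is a genuine error in the statement as transcribed, and no convention recoverable from the paper's definition will rescue it. With the definition given, $\langle 0,y-x\rangle=0\leq 0$ holds for every $y$, so $0\in\partial^{\star}f(x)$ for \emph{every} $x$, and the implication $0\in\partial^{\star}f(x)\Rightarrow x\in\argmin f$ is simply false (take $f(x)=x$ on $\R$ and $x=0$). Indeed (iii) is internally inconsistent with (i) and with Theorem \ref{starnonempty}: a nonempty closed cone always contains $0$ (let $t\downarrow 0$ in $tc$), so for $x\notin\argmin f$ statement (iii) would force $\partial^{\star}f(x)=\emptyset$, contradicting $\partial^{\star}f(x)\setminus\{0\}\neq\emptyset$. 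The correct statement attaches the equivalence to the Greenberg--Pierskalla subdifferential, where the strict inequality $\langle 0,y-x\rangle<0$ fails as soon as $S^f_{<,f(x)}\neq\emptyset$: one has $0\in\partial^{\mathrm{GP}}f(x)$ iff $S^f_{<,f(x)}=\emptyset$ iff $x\in\argmin f$ (equivalently, $\partial^{\star}f(x)=\R^n$ iff $x\in\argmin f$). So rather than trying to prove (iii) as written, you should replace $\partial^{\star}$ by $\partial^{\mathrm{GP}}$ (or restate it as $\partial^{\star}f(x)=\R^n$), after which both directions reduce, exactly as you say, to $S^f_{<,f(x)}=\emptyset\iff x\in\argmin f$. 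Note that the paper later invokes this item in Section 3 to conclude $0\notin\partial^{\star}f(x)$ for $x\notin S^f_{\leq,0}$; that step inherits the same defect, although the well-definedness of $P_f$ really only needs Theorem \ref{starnonempty}.
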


The  nontrivialness of the star subdifferential is guaranteed  by the following theorem appeared in \cite[Proposition 31]{Penot98}. 
\begin{theorem}\label{starnonempty}Let  a function $f:\R^n\to \R$ be quasi-convex and upper semicontinuous and let $x\in \R^n$ be given. Then  $\partial^{\star}f(x)\setminus\{0\}\neq\emptyset$.
\end{theorem}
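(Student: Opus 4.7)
The plan is to split into two cases according to whether the strict sublevel set $S^f_{<,f(x)}$ is empty or not, and reduce the nonempty case to a standard separation argument.

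First I would handle the trivial case $S^f_{<,f(x)} = \emptyset$. In this situation the defining inequality $\langle g, y-x\rangle \leq 0$ holds vacuously for every $g \in \R^n$, so $\partial^{\star}f(x) = \R^n$ and in particular $\partial^{\star}f(x) \setminus \{0\} \neq \emptyset$. (Equivalently, one may invoke Theorem~\ref{Generalized-starsub-properties}(iii) to observe that $x$ is a global minimizer.) This case costs nothing.

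Now suppose $C := S^f_{<,f(x)}$ is nonempty. The two standing hypotheses on $f$ give $C$ exactly the structure needed for a separation argument: quasi-convexity of $f$ implies that $C$ is convex, and upper semicontinuity of $f$ implies (by the very definition recalled in the preliminaries) that $C$ is open. Moreover, clearly $x \notin C$ since $f(x) < f(x)$ is false. Hence I have a nonempty open convex set $C \subset \R^n$ and a point $x$ outside $C$. By the standard geometric Hahn--Banach theorem for Euclidean space (separation of a convex set and a point not in it), there exists a nonzero vector $g \in \R^n$ such that
\begin{equation*}
\langle g, y\rangle \leq \langle g, x\rangle \quad \text{for all } y \in C.
\end{equation*}
Rearranging, $\langle g, y-x\rangle \leq 0$ for every $y \in S^f_{<,f(x)}$, which is precisely the defining condition for $g \in \partial^{\star}f(x)$. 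Since $g \neq 0$, this yields $\partial^{\star}f(x) \setminus \{0\} \neq \emptyset$.

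The step that requires the most care is verifying the hypotheses of the separation theorem in the nontrivial case. In particular, I must be certain that $C$ is \emph{open} (which uses upper semicontinuity exactly as defined in the paper, via openness of strict sublevel sets), that $C$ is \emph{convex} (which uses quasi-convexity), and that $x \notin C$ (which is immediate). The weaker inequality $\leq 0$ in the star subdifferential, as opposed to the strict $< 0$ in the Greenberg--Pierskalla subdifferential, is what makes the argument clean: one does not need strict separation, only the ordinary supporting-hyperplane-type statement, and the produced separator can be taken nonzero by the conclusion of the separation theorem itself.
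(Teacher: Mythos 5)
Your argument is correct. Note, however, that the paper itself gives no proof of this statement: it simply cites Penot's Proposition~31, so there is no in-paper argument to compare against. Your two-case separation proof is the standard (and surely the intended) one: the vacuous case $S^f_{<,f(x)}=\emptyset$ gives $\partial^{\star}f(x)=\R^n$, and otherwise quasi-convexity and upper semicontinuity (exactly as defined in the preliminaries) make $S^f_{<,f(x)}$ a nonempty open convex set not containing $x$, so the finite-dimensional separation theorem produces a nonzero $g$ with $\langle g,y-x\rangle\leq 0$ on that set, i.e.\ a nonzero star subgradient. The only blemish is your parenthetical in the first case: Theorem~\ref{Generalized-starsub-properties}(iii) only tells you that $0\in\partial^{\star}f(x)$ when $x$ is a minimizer, which by itself does not produce a \emph{nonzero} element, so that remark is not actually an equivalent route; but your primary argument there (the defining inequality holds vacuously for every $g$) is what does the work and is sound. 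You might also note that your separator in the nonempty case in fact satisfies the strict inequality on the open set, so the same argument shows $\partial^{\mathrm{GP}}f(x)\setminus\{0\}\neq\emptyset$ there, which is consistent with Theorem~\ref{Generalized-starsub-properties}(ii).
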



A  function $f:\R^n\to\R$ with $S^f_{\leq,0}\neq\emptyset$ is said to be satisfying {\it property (\textbf{sH\"{o}l})} on $ S^f_{\leq,0}$ \cite{NAP16} if there exist $\delta>0$ and $L>0$ such that
\begin{eqnarray*}|f(x)-f(q)|\leq L\|x-q\|^{\delta},\hspace{0.15cm} \textrm{for all }q\in  S^f_{\leq,0}, x\in \R^n.
\end{eqnarray*}

We denote the positive part of a function $f$ by $f_+$, i.e., $f_+(x):=\max\{f(x),0\}$ for all $x\in \R^n$.
The following technical lemma  will play a crucial role in the sequel and its proof is due to Konnov \cite{Konnov03}.

\begin{theorem}\label{SQFP-lemmak}Let  $f:\R^n\to\R$ be a quasi-convex upper semicontinuous function with $S^f_{<,0}\neq\emptyset$. If the function $f$ satisfies the property (\textbf{sH\"{o}l}) on $ S^f_{\leq,0}$ with order $\delta$ and modulus $L$, then for each $x\notin S^f_{\leq,0}$, we have $f_+(x)\leq L\left\langle \frac{c}{\|c\|}, x-q\right\rangle^{\delta}$, for all $q\in  S^f_{\leq,0}$ and $c\in\partial^{\star}f(x)\setminus\{0\}$.
\end{theorem}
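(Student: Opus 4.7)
The plan is to exploit the contrapositive of the star subgradient inequality: if $c\in\partial^{\star}f(x)\setminus\{0\}$ satisfies $\langle c,y-x\rangle\leq 0$ for every $y\in S^f_{<,f(x)}$, then $\langle c,y-x\rangle>0$ forces $f(y)\geq f(x)$. Because $\partial^{\star}f(x)$ is a cone (Theorem~\ref{Generalized-starsub-properties}(i)), I may rescale and assume $\|c\|=1$ throughout. Set $\alpha:=\langle c,x-q\rangle$. Since $x\notin S^f_{\leq,0}$ gives $f(x)>0$, we have $q\in S^f_{\leq,0}\subset S^f_{<,f(x)}$, and the star subgradient inequality applied to $q$ yields $\alpha\geq 0$.

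The key step is to introduce, for each $\beta>\alpha$, the auxiliary probe point $y_\beta:=q+\beta c$. A direct computation gives $\langle c,y_\beta-x\rangle=-\alpha+\beta>0$, so the contrapositive above produces the lower bound $f(y_\beta)\geq f(x)$. On the other hand, $q\in S^f_{\leq,0}$ and $\|y_\beta-q\|=\beta$, so property $(\textbf{sH\"{o}l})$ applied with $y_\beta$ and $q$ in the roles of its $x$ and $q$ furnishes the upper bound $f(y_\beta)\leq f(q)+L\beta^\delta\leq L\beta^\delta$. Combining both estimates gives $f(x)\leq L\beta^\delta$ for every $\beta>\alpha$; letting $\beta\downarrow\alpha$ and using continuity of $\beta\mapsto L\beta^\delta$ on $[0,\infty)$ produces $f(x)\leq L\alpha^\delta$. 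Since $f_+(x)=f(x)$, the conclusion follows.

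The main obstacle is recognizing why a direct application of $(\textbf{sH\"{o}l})$ between $x$ and $q$ is insufficient: it would only yield $f(x)\leq L\|x-q\|^\delta$, and Cauchy--Schwarz shows $\|x-q\|\geq\alpha$, so the target inequality is strictly tighter than what $(\textbf{sH\"{o}l})$ alone can deliver. The trick is therefore to avoid comparing $x$ with $q$ directly, and instead to compare the shifted point $y_\beta=q+\beta c$ with $q$; the star subgradient inequality at $x$, evaluated at $y_\beta$, then transfers the bound back to $x$ across the supporting hyperplane $\{y:\langle c,y-x\rangle=0\}$.
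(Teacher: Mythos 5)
Your proof is correct. The paper does not reproduce an argument for this lemma but defers to Konnov's cited proof, and your construction --- probing with $y_\beta=q+\beta c$ for $\beta>\langle c,x-q\rangle$, using the star subgradient inequality in contrapositive form to get $f(y_\beta)\geq f(x)$, bounding $f(y_\beta)$ via (\textbf{sH\"{o}l}) against $q$, and letting $\beta$ decrease to $\langle c,x-q\rangle$ --- is exactly that standard argument, including the necessary limiting step that cannot be avoided because the star inequality is non-strict.
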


A function $f:\R^n\to \R$ is said to be $0$-\textit{lower semicontinuous}\cite{NAP16} if its zero sublevel set $ S^f_{\leq,0}$ is a closed set.
Consider a function $f:\R\to \R$ defined by
\begin{eqnarray*}f(x)=\left\{%
	\begin{array}{ll}
		\lfloor x\rfloor& \hbox{ ; $x>1$,}\\
		x& \hbox{ ; otherwise},
	\end{array}
	\right.\end{eqnarray*}
where $\lfloor x\rfloor$  is a floor function, proposed in \cite{NAP16}. Observe that $f$ is $0$-lower semicontinuous and upper semicontinuous but not lower semicontinuous.

We will close this section by recalling the concept of set convergence, which is known as Painlev\'{e}-Kuratowski
convergence. All of these definitions and some more further properties can be found in \cite[Chapter 4]{RW98} and \cite[Chapter 2]{Burachik-Iusem08}.

We denote the family of subsets of $\N$ representing all the tails of $\N$ by  
\begin{eqnarray*}\N_{\infty}:=\{N\subset \N:\N\setminus N \textrm{ is finite}\},
\end{eqnarray*}
and the family of subsets of $\N$ representing all the subsequence of $\N$ by 
\begin{eqnarray*}\N_{\infty}^\sharp:=\{N\subset \N:N \textrm{ is infinite}\}.
\end{eqnarray*}
By using these notations, the subsequence of a sequence $\{x_k\}_{k\in\N}$ has the form $\{x_k\}_{k\in N}$ with $N\in \N_{\infty}^\sharp$, while the tail of $\{x_k\}_{k\in\N}$ has the form $\{x_k\}_{k\in N}$ with $N\in \N_{\infty}$. We use the notation $\lim_{k\in N}x_k$ in the case of convergence of a subsequence of $\{x_k\}_{k\in\N}$ designated by an index set $N$ in $\N_{\infty}$ or $\N_{\infty}^\sharp$.

Let  $\{C_k\}_{k\in\N}$ be a sequence of subsets of $\R^n$ and $C\subset\R^n$. The {\it outer limit} is the set
\begin{eqnarray*}\mathrm{Lim sup}_{k\to+\infty}C_k&:=&\{x\in \R^n:\exists N\in\N_{\infty}^\sharp, \forall k\in N, \exists x_k\in C_k \textrm{ such that } \lim_{k\in N}x_k=x \}
\end{eqnarray*}
and the {\it inner limit} is the set
\begin{eqnarray*}\mathrm{Lim inf}_{k\to+\infty}C_k:=\{x\in \R^n:\exists N\in\N_{\infty}, \forall k\in N, \exists x_k\in C_k \textrm{ such that } \lim_{k\in N}x_k=x \}.
\end{eqnarray*}
We say that the sequence $\{C_k\}_{k\in\N}$ converges to $C$ if the outer and inner limit sets are equal to $C$, i.e.,
\begin{eqnarray*}\mathrm{Lim}_{k\to+\infty}C_k:=\mathrm{Lim sup}_{k\to+\infty}C_k=\mathrm{Lim inf}_{k\to+\infty}C_k=C.
\end{eqnarray*}

The following theorem will be a key tool in our work and the proof can be found in \cite[Exercise 2.2]{Burachik-Iusem08}.
\begin{theorem}\label{set-convergence-nested}Let $\{C_k\}_{k\in\N}$ be a sequence of subsets of $\R^n$ such that $C_{k+1}\subset C_{k}$ for all $k\geq1$. Then $\mathrm{Lim}_{k\to+\infty}C_k$ exists and $\mathrm{Lim}_{k\to+\infty}C_k=\bigcap_{k\in\N}\mathrm{cl}\left(C_k\right)$.
\end{theorem}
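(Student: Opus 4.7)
The plan is to set $C := \bigcap_{k\in\N}\mathrm{cl}(C_k)$ and prove the chain of inclusions
$$C \;\subseteq\; \mathrm{Lim\,inf}_{k\to+\infty}C_k \;\subseteq\; \mathrm{Lim\,sup}_{k\to+\infty}C_k \;\subseteq\; C,$$
which will simultaneously give existence of the Painlev\'{e}--Kuratowski limit and identify its value. The middle inclusion is immediate from the definitions, since every tail $N\in\N_\infty$ is in particular an element of $\N_\infty^\sharp$.

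First I would show $\mathrm{Lim\,sup}_{k\to+\infty}C_k\subseteq C$. Take $x\in\mathrm{Lim\,sup}_{k\to+\infty}C_k$, so there exist $N\in\N_\infty^\sharp$ and $x_k\in C_k$ for $k\in N$ with $\lim_{k\in N}x_k=x$. Fix $j\in\N$. Since $N$ is infinite, the set $N_j:=\{k\in N: k\geq j\}$ is again an infinite subset of $N$, and by the nestedness $C_{k+1}\subset C_k$ we have $x_k\in C_k\subset C_j$ for every $k\in N_j$. The sequence $\{x_k\}_{k\in N_j}$ still converges to $x$, so $x\in\mathrm{cl}(C_j)$. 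As $j$ is arbitrary, $x\in C$.

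Next I would verify $C\subseteq\mathrm{Lim\,inf}_{k\to+\infty}C_k$. Let $x\in C$, i.e., $x\in\mathrm{cl}(C_k)$ for every $k\in\N$. For each $k$, the definition of closure allows the choice of some $x_k\in C_k$ with $\|x_k-x\|<1/k$. Then $x_k\in C_k$ for every $k\in\N$ (so in particular for every $k$ in the tail $N=\N\in\N_\infty$), and $x_k\to x$ as $k\to+\infty$. Hence $x\in\mathrm{Lim\,inf}_{k\to+\infty}C_k$, by definition. Combining the three inclusions yields the claim.

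I do not anticipate a substantial obstacle; the only delicate point is using the nestedness correctly in the $\mathrm{Lim\,sup}$ inclusion, where one must truncate the index subset $N$ to indices above $j$ so that the approximating points actually live inside $C_j$. The argument for $C\subseteq\mathrm{Lim\,inf}$ is essentially a diagonal choice and does not require any hypothesis beyond $x\in\mathrm{cl}(C_k)$ for every $k$; note in particular that no lower semicontinuity or closedness of the individual $C_k$'s is needed, since the closure operator is absorbed into the definition of $C$.
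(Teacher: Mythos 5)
Your proof is correct. Note that the paper does not actually prove this statement: it is quoted as a known result with the proof deferred to Burachik and Iusem (Exercise~2.2), so there is no in-paper argument to compare against. Your three-inclusion scheme $C\subseteq\mathrm{Lim\,inf}\subseteq\mathrm{Lim\,sup}\subseteq C$ is the standard way to verify it directly from the definitions, and each step checks out: the middle inclusion follows from $\N_\infty\subseteq\N_\infty^\sharp$; the truncation of the index set $N$ to $N_j=\{k\in N:k\geq j\}$ is exactly the point where nestedness is needed, and you handle it correctly; and the diagonal choice $x_k\in C_k$ with $\|x_k-x\|<1/k$ settles the remaining inclusion (vacuously so if some $C_k$ is empty). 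Your closing observation that nestedness is not needed for $C\subseteq\mathrm{Lim\,inf}$ is also accurate.
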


Let us denote the distance (function) in  $\R^n$ by $\mathrm{dist}:\R^n\times \R^n\to\R$ and recall that for $C\subset \R^n$,
\begin{eqnarray*}\mathrm{dist}(x,C):= \inf_{c\in C}\|x-c\|.
\end{eqnarray*}

The following theorem provides a relation between set convergence and the distance function, see \cite[Proposition 2.2.11]{Burachik-Iusem08} for more details.
\begin{theorem}\label{set-convergence-distance}Let  $\{C_k\}_{k\in\N}$ be a sequence of subsets of $\R^n$ and  $C$ be a closed subset of $\R^n$. Then, it holds that
	\begin{eqnarray*}
		\mathrm{Lim}_{k\to+\infty}C_k=C \Longleftrightarrow \lim_{k\to+\infty}\mathrm{dist}(x,C_k)=\mathrm{dist}(x,C),
	\end{eqnarray*}
	for every $x\in\R^n$.
\end{theorem}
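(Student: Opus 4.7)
The plan is to establish the two implications by controlling $\liminf$ and $\limsup$ of the real sequence $\{\mathrm{dist}(x,C_k)\}_{k\in\N}$ separately and translating those scalar inequalities into set inclusions via the definitions of the inner and outer limits. Throughout I fix an arbitrary $x\in\R^n$.

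For the forward direction, assume $\mathrm{Lim}_{k\to+\infty}C_k=C$. To obtain $\limsup_{k\to+\infty}\mathrm{dist}(x,C_k)\leq\mathrm{dist}(x,C)$, I use closedness of $C$ to pick a best approximation $\bar c\in C$ with $\|x-\bar c\|=\mathrm{dist}(x,C)$; since $\bar c\in C=\mathrm{Lim inf}_{k\to+\infty}C_k$, there exist $N\in\N_{\infty}$ and $c_k\in C_k$ for $k\in N$ with $c_k\to\bar c$, giving $\mathrm{dist}(x,C_k)\leq\|x-c_k\|\to\|x-\bar c\|$ along a cofinite index set. To obtain $\liminf_{k\to+\infty}\mathrm{dist}(x,C_k)\geq\mathrm{dist}(x,C)$, I argue by contradiction: if $\lim_{k\in N}\mathrm{dist}(x,C_k)=\alpha<\mathrm{dist}(x,C)$ for some $N\in\N_{\infty}^\sharp$, I pick $c_k\in C_k$ with $\|x-c_k\|\leq\mathrm{dist}(x,C_k)+1/k$, note that $\{c_k\}_{k\in N}$ is bounded, extract via Bolzano--Weierstrass a convergent further subsequence $c_k\to c^{\ast}$, conclude $c^{\ast}\in\mathrm{Lim sup}_{k\to+\infty}C_k=C$, and derive the contradiction $\|x-c^{\ast}\|=\alpha<\mathrm{dist}(x,C)$.

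For the reverse direction, assuming the pointwise distance-function convergence, I split the set equality into $\mathrm{Lim sup}_{k\to+\infty}C_k\subseteq C$ and $C\subseteq\mathrm{Lim inf}_{k\to+\infty}C_k$. The first inclusion follows because any $x$ in the outer limit is the limit of some $x_k\in C_k$ along a subsequence, which forces $\mathrm{dist}(x,C_k)\to 0$ along that subsequence; the existence of the full limit then forces $\mathrm{dist}(x,C)=0$, and closedness of $C$ yields $x\in C$. For the second, any $x\in C$ has $\lim_k\mathrm{dist}(x,C_k)=0$ by hypothesis, so choosing $c_k\in C_k$ with $\|x-c_k\|\leq\mathrm{dist}(x,C_k)+1/k$ for all sufficiently large $k$ produces a cofinite sequence with $c_k\to x$, witnessing $x\in\mathrm{Lim inf}_{k\to+\infty}C_k$. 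The main technical obstacle is the $\liminf$ lower bound in the forward implication: the approximate minimizers $c_k$ live only along a subsequence and need not converge on the nose, so the argument must combine a boundedness estimate, a Bolzano--Weierstrass extraction, and the outer-limit characterization of $C$ before the desired contradiction can be reached.
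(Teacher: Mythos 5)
The paper does not prove this statement at all; it simply cites \cite[Proposition 2.2.11]{Burachik-Iusem08}. Your argument is correct and is essentially the standard textbook proof of that result: both implications are sound, the Bolzano--Weierstrass extraction in the $\liminf$ lower bound is exactly the right device, and the reverse direction correctly closes via $\mathrm{Lim\,sup}\subseteq C\subseteq\mathrm{Lim\,inf}\subseteq\mathrm{Lim\,sup}$. The only point worth flagging is that picking a best approximation $\bar c\in C$ (and approximate minimizers $c_k\in C_k$) tacitly assumes the relevant sets are nonempty; under the convention $\mathrm{dist}(x,\emptyset)=+\infty$ these degenerate cases are handled trivially, but a complete write-up should say so.
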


\section{Star Subgradient Projection Operator}

In this section we will introduce an important operator for dealing with the quasi-convex feasibility problem.

Let $f:\R^n\to\R$ with $S^{f}_{\leq,0}\neq\emptyset$ be a quasi-convex, upper semicontinuous, $0$-lower semicontinuous, and satisfying the Property (\textbf{sH\"{o}l}) on $ S^f_{\leq,0}$ with order $\delta>0$ and modulus $L>0$. Let $c_f(x)\in\partial^{\star}f(x)$ be a nonzero star subgradient of $f$ at $x\in\R^n$. The operator $P_f:\R^n\to\R^n$ defined by 
\begin{eqnarray}\label{SSPP}P_f(x)=\left\{%
	\begin{array}{ll}
		x-\left(\frac{f_+(x)}{L}\right)^{1/\delta}\frac{c_f(x)}{\|c_f(x)\|}& \hbox{ if $f(x)>0$,}\\
		x& \hbox{   if $f(x)\leq0$,}
	\end{array}
	\right.\end{eqnarray}
is called a {\it star subgradient projection relative to} $f$

Obviously, for $x\notin S^{f}_{\leq,0}$, we have  $f(x)=f_+(x)$ and $f(x)>0\geq\inf_{u\in\R^n}f(u)$. This means that $x$ is not a minimizer of $f$ and it follows from Theorem \ref{Generalized-starsub-properties} (vi) that $0\notin \partial^{\star}f(x)$. Consequently, Theorem \ref{starnonempty} yields that there always exists a nonzero star subgradient $c_f(x)\in\partial^{\star}f(x)$. Therefore, the well-definedness of the star subgradient projection $P_f$ is guaranteed.  
\hskip0.6cm

The following proposition shows an important relation between the fixed point set of $P_f$, $$\fix P_f:=\{x\in\R^n:P_fx=x\},$$  and the sublevel set $S^f_{\leq,0}$.

\begin{proposition}\label{pp23}
	If $P_f:\R^n\to\R^n$ be a star subgradient projection relative to $f$, then 
	$$\fix P_f=S^f_{\leq,0}.$$ 
\end{proposition}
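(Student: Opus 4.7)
The plan is to prove the two inclusions $S^f_{\leq,0} \subset \fix P_f$ and $\fix P_f \subset S^f_{\leq,0}$ directly from the piecewise definition \eqref{SSPP} of $P_f$.

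The inclusion $S^f_{\leq,0} \subset \fix P_f$ is essentially immediate: if $x \in S^f_{\leq,0}$, i.e., $f(x) \leq 0$, then the second branch of the definition of $P_f$ applies and gives $P_f(x) = x$, so $x \in \fix P_f$. No subgradient information is needed here.

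For the reverse inclusion $\fix P_f \subset S^f_{\leq,0}$, I would argue by contradiction. Suppose $x \in \fix P_f$ but $x \notin S^f_{\leq,0}$, so that $f(x) > 0$. Then the first branch of \eqref{SSPP} applies, giving
\begin{equation*}
0 = P_f(x) - x = -\left(\frac{f_+(x)}{L}\right)^{1/\delta}\frac{c_f(x)}{\|c_f(x)\|}.
\end{equation*}
The unit vector $c_f(x)/\|c_f(x)\|$ is well defined and nonzero by the remark (following the definition of $P_f$) that Theorem \ref{Generalized-starsub-properties}(iii) together with Theorem \ref{starnonempty} guarantees the existence of a nonzero star subgradient $c_f(x)$ whenever $f(x) > 0$. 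Consequently the equation above forces $f_+(x) = 0$, contradicting $f(x) > 0$. Hence $f(x) \leq 0$, i.e., $x \in S^f_{\leq,0}$.

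The main (and only) subtlety I expect is justifying that the first-branch fixed point equation genuinely forces $f_+(x) = 0$, which hinges on the well-definedness statement made just after \eqref{SSPP}: whenever $f(x) > 0$ we have $0 \notin \partial^{\star}f(x)$ so the selection $c_f(x)$ can be (and is) taken nonzero, and hence the displacement vector vanishes only when $f_+(x) = 0$. Apart from this observation, the result is a purely definitional check and no use of quasi-convexity, upper semicontinuity, $0$-lower semicontinuity, or property \textbf{(sH\"ol)} is required for the identity itself (these hypotheses are used only to guarantee that $P_f$ is well defined).
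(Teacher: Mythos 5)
Your proof is correct and takes essentially the same route as the paper's: the inclusion $S^f_{\leq,0}\subset\fix P_f$ is read directly off the second branch of \eqref{SSPP}, and the reverse inclusion rests on the observation that for $f(x)>0$ the guaranteed nonzero selection $c_f(x)$ makes the displacement $\left(f_+(x)/L\right)^{1/\delta}c_f(x)/\|c_f(x)\|$ nonzero. The only cosmetic difference is that the paper argues the contrapositive ($x\notin S^f_{\leq,0}$ implies $x\notin\fix P_f$) where you argue by contradiction, and you correctly cite Theorem \ref{Generalized-starsub-properties}(iii) where the paper's text has a typo ("(vi)").
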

\begin{proof}
	It is clear that $S^f_{\leq,0}\subset\fix P_f$. Suppose that $x\notin S^f_{\leq,0}\neq\emptyset$. Then, $\partial^{\star}f(x)\setminus\{0\}\neq\emptyset$. In this case, we can find a nonzero star subgradient $c_f(x)\in\partial^{\star}f(x)\setminus\{0\}$ and $$\left(\frac{f_+(x)}{L}\right)^{1/\delta}\frac{c_f(x)}{\|c_f(x)\|}\neq0,$$
	consequently, $x\notin\fix P_f$. Hence, we conclude that $\fix P_f=S^f_{\leq,0}$, as required.
\end{proof}

The following proposition states an important property of the star subgradient projection operator.
\begin{proposition}
	If $S^f_{<,0}\neq\emptyset$, then $P_f$  is a cutter, that is 
	\begin{eqnarray*}
		\langle P_fx-x,P_fx-y\rangle\leq0, 
	\end{eqnarray*}
	for all $x\in\R^n$  and for all $y\in\mathrm{Fix}P_f$.
\end{proposition}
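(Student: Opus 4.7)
The plan is to split into two cases based on whether or not $x \in S^f_{\leq,0}$ and reduce the substantive case to Theorem \ref{SQFP-lemmak}. By Proposition \ref{pp23}, the fixed point set $\fix P_f$ coincides with $S^f_{\leq,0}$, so I may assume throughout that $y \in S^f_{\leq,0}$. If $f(x) \leq 0$, then by definition $P_f x = x$, whence $P_f x - x = 0$ and the cutter inequality reduces to $0 \leq 0$, which is trivial.

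The substantive case is $f(x) > 0$, i.e.\ $x \notin S^f_{\leq,0}$. First I would introduce abbreviations to keep the calculation clean: set
$$\alpha := \left(\frac{f_+(x)}{L}\right)^{1/\delta}, \qquad u := \frac{c_f(x)}{\|c_f(x)\|},$$
so that $P_f x - x = -\alpha u$ and $P_f x - y = (x-y) - \alpha u$. Then a direct expansion gives
$$\langle P_f x - x,\, P_f x - y\rangle \;=\; \langle -\alpha u,\, (x-y) - \alpha u\rangle \;=\; -\alpha \langle u, x-y\rangle + \alpha^{2},$$
using $\|u\|=1$. Since $\alpha > 0$ in this case, the required inequality $\langle P_f x - x, P_f x - y\rangle \leq 0$ is equivalent to
$$\alpha \;\leq\; \langle u, x-y\rangle, \qquad \text{i.e.,} \qquad \left(\frac{f_+(x)}{L}\right)^{1/\delta} \;\leq\; \left\langle \frac{c_f(x)}{\|c_f(x)\|},\, x-y\right\rangle.$$

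At this point the work has been shifted onto Theorem \ref{SQFP-lemmak}. Because $f$ is quasi-convex and upper semicontinuous, $S^f_{<,0}\neq\emptyset$ by hypothesis, $f$ satisfies (\textbf{sH\"{o}l}) on $S^f_{\leq,0}$ with order $\delta$ and modulus $L$, $x \notin S^f_{\leq,0}$, $y \in S^f_{\leq,0}$, and $c_f(x)\in\partial^{\star}f(x)\setminus\{0\}$, Theorem \ref{SQFP-lemmak} applies and yields
$$f_+(x) \;\leq\; L\left\langle \frac{c_f(x)}{\|c_f(x)\|},\, x-y\right\rangle^{\delta}.$$
Dividing by $L$ and raising to the power $1/\delta$ (noting that the inner product on the right is nonnegative, as guaranteed by the star-subgradient inequality together with the above bound) gives exactly $\alpha \leq \langle u, x-y\rangle$, completing the proof.

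I do not anticipate a genuine obstacle: the only point that needs a sentence of justification is the legitimacy of raising both sides to the $1/\delta$ power, which is handled by observing that $\langle c_f(x), x-y\rangle \geq 0$ (since $\langle c_f(x), y-x\rangle \leq 0$ for any $y \in S^f_{<,f(x)} \supseteq S^f_{\leq,0}$ by the defining inequality of the star subgradient, using $f(x) > 0$). Everything else is algebraic manipulation.
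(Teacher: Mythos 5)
Your proof is correct and follows essentially the same route as the paper: the same case split via Proposition \ref{pp23}, the same expansion of $\langle P_fx-x,P_fx-y\rangle$ into $\alpha^2-\alpha\langle u,x-y\rangle$, and the same appeal to Theorem \ref{SQFP-lemmak} to bound the cross term. Your extra remark justifying why one may take the $1/\delta$ power (nonnegativity of $\langle c_f(x),x-y\rangle$ from the star-subgradient inequality, since $S^f_{\leq,0}\subset S^f_{<,f(x)}$ when $f(x)>0$) is a small point the paper leaves implicit, but it does not change the argument.
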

\begin{proof}
	From Proposition \ref{pp23}, we note here again that $\fix P_f=S^f_{\leq,0}$. If $x\in S^f_{\leq,0}$, then it is clear that $P_f$ is a cutter. Suppose that $x\notin S^f_{\leq,0}$ and $y\in S^f_{\leq,0}$. Then, $f(y)\leq0<f(x)$. Now, by invoking the definition of star subgradient projection and Theorem \ref{SQFP-lemmak}, we have
	\begin{eqnarray*}
		\left\langle P_fx-x,P_fx-y\right\rangle &=&\|P_fx-x\|^2+\left\langle P_fx-x,x-y\right\rangle\\
		&=&\left(\frac{f_+(x)}{L}\right)^{2/\delta} - \left(\frac{f_+(x)}{L}\right)^{1/\delta}\left\langle \frac{c_f(x)}{\|c_f(x)\|},x-y\right\rangle\\
		&\leq&\left(\frac{f_+(x)}{L}\right)^{2/\delta} - \left(\frac{f_+(x)}{L}\right)^{2/\delta}\\
		&=&0,
	\end{eqnarray*}
	which completes the proof.\end{proof}

The following proposition shows the so-called {\it fixed-point closed} property of the star subgradient projection operator.
\begin{proposition}
	If $S^f_{<,0}\neq\emptyset$, then $P_f$  is fixed-point closed, that is, 
	for any sequence $\{x_k\}_{k\in\N}\subset\R^n$ such that $x_k\to x\in\R^n$ as $k\to+\infty$ and $\lim_{k\to+\infty}\|P_fx_k-x_k\|=0$, we have $x\in\fix P_f$.
\end{proposition}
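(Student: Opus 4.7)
\medskip
\noindent\textbf{Proof plan.}
The starting observation is that the identity $\|P_f x_k - x_k\| = (f_+(x_k)/L)^{1/\delta}$ holds in both branches of the definition (\ref{SSPP}), so the hypothesis $\|P_f x_k - x_k\|\to 0$ translates immediately into $f_+(x_k)\to 0$. Since $\fix P_f = S^f_{\leq,0}$ by Proposition~\ref{pp23}, the target is to prove $x\in S^f_{\leq,0}$.

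I would then split into two cases. If $f(x_{k_j})\leq 0$ along some subsequence, then $x_{k_j}\in S^f_{\leq,0}$; because the $0$-lower semicontinuity makes $S^f_{\leq,0}$ closed, the limit $x$ lies in $S^f_{\leq,0}$ and the proof is complete. Otherwise $f(x_k)>0$ for all large $k$ and $f(x_k)\downarrow 0^+$, in which case I would argue by contradiction, supposing $f(x)>0$. Applying Property~(\textbf{sH\"{o}l}) to the pair $(x,q)$ for $q\in S^f_{\leq,0}$ yields $f(x)\leq f(q)+L\|x-q\|^\delta\leq L\|x-q\|^\delta$; taking the infimum over $q$ gives $\mathrm{dist}(x,S^f_{\leq,0})\geq (f(x)/L)^{1/\delta}>0$. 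A contradiction would then follow from proving $\mathrm{dist}(x_k,S^f_{\leq,0})\to 0$ and invoking the continuity of $\mathrm{dist}(\cdot,S^f_{\leq,0})$.

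The main obstacle is exactly this last convergence, because Property~(\textbf{sH\"{o}l}) and the cutter property of the preceding proposition each supply only the wrong-direction bounds $\mathrm{dist}(x_k,S^f_{\leq,0})\geq (f_+(x_k)/L)^{1/\delta}$ and $\mathrm{dist}(x_k,S^f_{\leq,0})\geq\|P_f x_k - x_k\|$. To close the gap I plan to invoke the hypothesis $S^f_{<,0}\neq\emptyset$ together with the quasi-convexity: fixing $q_0\in S^f_{<,0}$, each segment $[q_0,x_k]$ lies inside the convex sublevel set $S^f_{\leq,f(x_k)}$, so (after passing to a monotone subsequence) the family $\{S^f_{\leq,f(x_k)}\}$ is nested decreasing with intersection $S^f_{\leq,0}$. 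Theorem~\ref{set-convergence-nested} identifies its Painlev\'e--Kuratowski limit as $\bigcap_k\mathrm{cl}(S^f_{\leq,f(x_k)})$, and Theorem~\ref{set-convergence-distance} then converts this set convergence into the pointwise distance convergence $\mathrm{dist}(x_k,S^f_{\leq,0})\to 0$ at the point $x$, producing the required contradiction.
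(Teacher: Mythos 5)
Your plan is essentially the paper's own argument: reduce the hypothesis to $f_+(x_k)\to 0$, build a nested decreasing family of convex sublevel sets whose intersection is $S^f_{\leq,0}$, and combine Theorems~\ref{set-convergence-nested} and~\ref{set-convergence-distance} with the $0$-lower semicontinuity to conclude $\mathrm{dist}(x,S^f_{\leq,0})=0$; the paper merely uses the sets $S^f_{<,1/n}$ in place of your $S^f_{\leq,f(x_k)}$ and argues directly, so your contradiction detour through Property~(\textbf{sH\"{o}l}) is superfluous once the distance convergence is in hand. One remark: Theorem~\ref{set-convergence-distance} needs the Painlev\'e--Kuratowski limit to equal $S^f_{\leq,0}$, whereas Theorem~\ref{set-convergence-nested} only delivers $\bigcap_k\mathrm{cl}\bigl(S^f_{\leq,f(x_k)}\bigr)$; the point $q_0\in S^f_{<,0}$ is exactly what bridges this gap (being interior to every set of the family, it guarantees that for any $y$ in the intersection of the closures the half-open segment $[q_0,y)$ lies in each $S^f_{\leq,f(x_k)}$, hence in $S^f_{\leq,0}$, giving $y\in\mathrm{cl}(S^f_{\leq,0})=S^f_{\leq,0}$), so your segment observation should be aimed at this identification rather than at the nestedness, which already follows from the monotonicity of $f(x_k)$ alone --- the paper's proof elides the same point with the phrase ``the property of closure''.
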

\begin{proof}
	Let $\{x_k\}_{k\in\N}\subset\R^n$ be a sequence such that $x_k\to x\in\R^n$ as $k\to+\infty$ and $\lim_{k\to+\infty}\|P_fx_k-x_k\|=0$. Note that 
	$$\left(\frac{f_+(x_k)}{L}\right)^{/\delta}=\|P_fx_k-x_k\|\to0,$$
	and then	$\lim_{k\to+\infty}f_+(x_k)=0.$
	Thus,  for each $n\in\N$,  there exists $k_n\in\N$ such that $f(x_k)\leq f_+(x_k)<\frac{1}{n}$ for all $k\geq k_n$. That is,  $x_k\in S^f_{<,\frac{1}{n}}$
	and subsequently that $$\mathrm{dist}\left(x_k,\mathrm{cl}(S^f_{<,\frac{1}{n}})\right)=0,$$ for all $k\geq k_n$.

	Since $y_{k}\to x$  as $k\to+\infty$, we also have that
	\begin{eqnarray*}\mathrm{dist}\left(x,\mathrm{cl}(S^f_{<,\frac{1}{n}})\right)=\lim_{l\to+\infty}\mathrm{dist}\left(x_{k},\mathrm{cl}(S^f_{<,\frac{1}{n}})\right)=0,\indent\textrm{for all }n\in\N.
	\end{eqnarray*}
	This implies that
	\begin{eqnarray}\label{lim1n}\lim_{n\to+\infty}\mathrm{dist}\left(x,\mathrm{cl}(S^f_{<,\frac{1}{n}})\right)=0.
	\end{eqnarray}
	
	On the other hand, since $f$ is a quasi-convex and upper semicontinuous function, we have that $S^f_{<,\frac{1}{n}}$ is a convex and open set for all $n\in\N$ and, further, we also have 
	$$\bigcap_{n\in\N}S^f_{<,\frac{1}{n}}(\supset S^f_{<,0})$$ is a nonempty convex set. Moreover, we observe that  $\{S^f_{<,\frac{1}{n}}\}_{n\in\N}$ and  $\{\mathrm{cl}(S^f_{<,\frac{1}{n}})\}_{n\in\N}$ are both decreasing. Further, we note that
	\begin{eqnarray*}\bigcap_{n\in\N}S^f_{<,\frac{1}{n}}= S^f_{\leq,0}.\end{eqnarray*}
	Thus, it follows from Theorem \ref{set-convergence-nested} and the property of closure that
	\begin{eqnarray}\label{SQFP-limsigma}
	\mathrm{Lim}_{n\to+\infty}\mathrm{cl}\left(S^f_{<,\frac{1}{n}}\right)=\bigcap_{n\in\N}\mathrm{cl}\left(S^f_{<,\frac{1}{n}}\right)
	=\mathrm{cl}\left(\bigcap_{n\in\N}S^f_{<,\frac{1}{n}}\right)=\mathrm{cl}( S^f_{\leq,0}).
	\end{eqnarray}

	Since $f$ is $0$-lower semicontinuous, we note that the sublevel set $S^f_{\leq,0}$ is a closed set. Therefore, invoking  (\ref{SQFP-limsigma}) together with Theorem \ref{set-convergence-distance}, we obtain that
	\begin{eqnarray}
	\label{SQFP-finitedist}
	\mathrm{dist}\left(x, S^f_{\leq,0}\right)=\mathrm{dist}\left(x,\mathrm{cl}(S^f_{\leq,0})\right)=\lim_{n\to+\infty}\mathrm{dist}\left(x,\mathrm{cl}(S^f_{<,\frac{1}{n}})\right)=0\end{eqnarray}
	and hence $x\in S^f_{\leq,0}=\fix P_f$. This completes the proof.\end{proof}

\section{Cyclic Star Subgradient Projection Method}
\hskip0.6cm

	Let $f_i:\R^n \rightarrow \R, i=1,\ldots,m,$ be quasi-convex, upper semicontinuous, $0$-lower semicontinuous, and satisfying the Property (\textbf{sH\"{o}l}) on $ S^{f_i}_{\leq,0}$ with order $\delta_i>0$ and modulus $L_i>0$, respectively. 
	The \textit{quasi-convex feasibility problem}  (in short, $\mathbf{QFP}$) is to
	find
	\begin{eqnarray*}\label{sqfp}\centering
		x^*\in \bigcap_{i=1}^mS^{f_i}_{\leq,0},
	\end{eqnarray*}
	provided that the intersection is nonempty.

In this section we are concerned with the study of convergence properties of an iterative algorithm which approaches a solution of the following  \textbf{QFP}. The following  iterative algorithm for solving the \textbf{QFP} is due to Censor-Segal\cite[Algorithm 13]{Censor-Segal06}.

\begin{algorithm}[H]\label{algorithm}
	\SetAlgoLined
	\textbf{Initialization}:  Take $x_1\in \R^n_1$ be arbitrary.\\
	\textbf{Iterative Step}: For a given current iterate $x_k\in \R^n$ ($n\geq 1$), calculate $y^i_k\in \R^n_1$ by
	$$y^0_k:=x_k$$
	\begin{eqnarray*}y^i_k:=y^{i-1}_k-\left(\frac{(f_i)_+(y^{i-1}_k)}{L_i}\right)^{1/\delta_i}\frac{c^{i-1}_k}{\|c^{i-1}_k\|},\indent i=1,\ldots,m,\end{eqnarray*}
	where $c^{i-1}_k\in\partial^{\star}f_i(y^{i-1}_k)$ is an arbitrary nonzero star subgradient of $f_i$ at $y^{i-1}_k$.\\
	Compute the next iterate $x_{k+1}\in\R^n$ by
	$$x_{k+1}:=y^{m}_k.$$
	Update $k:=k+1$.
	\caption{Cyclic Star Subgradient Projection Method}
\end{algorithm}

\begin{remark}\label{SQFP-remark-key}
	(i) Observe that the iterate $y_k^{i}$ in Algorithm \ref{algorithm} can be represented in the form of the star subgradient projection, that is
	$$y^{i}_k=P_{f_i}y^{i-1}_k,\indent i=1,\ldots,m,$$
	which yields that the iterate $x_{k+1}$ is in the form of $x_{k+1}=P_{f_m}P_{f_{m-1}}\cdots P_{f_2}P_{f_1}x_k.$
	
	(ii) Note that if there exists $k_0\in\N$ in which  $f_i(x_{k_0})\leq 0$ for all $i=1,\ldots,m$, then Algorithm \ref{algorithm} terminates and the iteration $x_{k_0}$ subsequently is a solution of the \textbf{QFP}. So to deal with the later convergence, we assume that Algorithm \ref{algorithm} does not terminate in any finite number of iterations $k\geq1$.
\end{remark}

In order to deal with our convergence theorem, we need to recall an important operator. We say that an
operator $T$ having a fixed point is $\rho $\textit{-strongly
	quasi-nonexpansive}, where $\rho \geq 0$, if 
\begin{equation*}
\Vert Tx-z\Vert ^{2}\leq \Vert x-z\Vert ^{2}-\rho \Vert Tx-x\Vert ^{2}\text{
	for all }x\in 
\mathbb{R}
^{n}\text{ and all }z\in \fix T.
\end{equation*}%
for any $\lambda \in (0,2]$. 

Next, we will investigate convergence analysis of a sequence generated by the cyclic star subgradient projection method (Algorithm \ref{algorithm}) as the following theorem.

\begin{theorem}\label{main-thm} If the intersection $\bigcap_{i=1}^mS^{f_i}_{<,0}$ is nonempty, then any sequence $\{x_k\}_{k\in\N}$ generated by Algorithm \ref{algorithm} converges to a solution to \textbf{QFP}.
\end{theorem}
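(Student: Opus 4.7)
The plan is to exploit two ingredients already established for each operator $P_{f_i}$ in the cycle: the cutter inequality with $\fix P_{f_i}=S^{f_i}_{\leq,0}$, and the fixed-point closed property. Under the hypothesis $\bigcap_{i=1}^m S^{f_i}_{<,0}\neq\emptyset$ every $S^{f_i}_{<,0}$ is nonempty, so both apply. First I would upgrade the cutter inequality to the $1$-strongly quasi-nonexpansive estimate
\begin{equation*}
\|P_{f_i}u - z\|^2 \le \|u-z\|^2 - \|P_{f_i}u-u\|^2, \qquad u\in\R^n,\ z\in\fix P_{f_i},
\end{equation*}
by expanding $\|u-z\|^2$ around $P_{f_i}u$ and invoking the cutter property. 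Fixing $z\in\bigcap_{i=1}^m S^{f_i}_{\leq,0}$, so that $z\in\fix P_{f_i}$ for every $i$, and telescoping this estimate along $x_k=y_k^0,y_k^1,\ldots,y_k^m=x_{k+1}$, I obtain the descent inequality
\begin{equation*}
\|x_{k+1}-z\|^2 \le \|x_k-z\|^2 - \sum_{i=1}^m \|y_k^i - y_k^{i-1}\|^2.
\end{equation*}

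From this descent I would read off two facts: the sequence $\{x_k\}_{k\in\N}$ is Fej\'er monotone with respect to the solution set (hence bounded), and summing in $k$ gives $\sum_{k}\|y_k^i-y_k^{i-1}\|^2<+\infty$, so that $\|y_k^i-y_k^{i-1}\|\to 0$ for each $i=1,\ldots,m$. By boundedness I extract a subsequence $x_{k_j}\to x^*$. Then $y_{k_j}^1=P_{f_1}x_{k_j}$ with $\|P_{f_1}x_{k_j}-x_{k_j}\|\to 0$, combined with the fixed-point closed property of $P_{f_1}$ applied to $\{x_{k_j}\}_j$, delivers $x^*\in\fix P_{f_1}=S^{f_1}_{\leq,0}$, and in particular $y_{k_j}^1\to x^*$. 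An induction on $i$ then proceeds: assuming $y_{k_j}^{i-1}\to x^*$, the identity $y_{k_j}^i=P_{f_i}y_{k_j}^{i-1}$ together with $\|y_{k_j}^i-y_{k_j}^{i-1}\|\to 0$ yields $y_{k_j}^i\to x^*$, and fixed-point closedness of $P_{f_i}$, now invoked along $\{y_{k_j}^{i-1}\}_j$, gives $x^*\in S^{f_i}_{\leq,0}$. Thus $x^*\in\bigcap_{i=1}^m S^{f_i}_{\leq,0}$ is a solution of \textbf{QFP}.

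Finally, applying the descent inequality with $z=x^*$ shows that $\{\|x_k-x^*\|\}_{k\in\N}$ is nonincreasing; since the subsequence $\|x_{k_j}-x^*\|\to 0$, the whole sequence converges to $x^*$, yielding the claimed convergence to a solution. The delicate point I anticipate is the inductive step: the fixed-point closed property of each $P_{f_i}$ must be invoked along the correct inner sequence $\{y_{k_j}^{i-1}\}_j$, not along $\{x_{k_j}\}_j$, so the induction hypothesis $y_{k_j}^{i-1}\to x^*$ has to be threaded through layer by layer. Beyond this bookkeeping the argument is the standard Fej\'er-monotonicity paradigm specialized to a cyclic composition of cutters with a common fixed point set.
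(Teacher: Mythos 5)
Your argument is correct, and it is the same Fej\'er-monotonicity paradigm that underlies the paper's proof, but you execute it quite differently: the paper treats the whole cycle as a single operator $T:=P_{f_m}\cdots P_{f_1}$, cites Cegielski's results that a composition of cutters (equivalently, $1$-SQNE operators) with a common fixed point is again SQNE with $\fix T=\bigcap_{i=1}^m\fix P_{f_i}$ and that the composition inherits fixed-point closedness, and then invokes a general convergence theorem for SQNE fixed-point closed operators as a black box. You instead telescope the $1$-SQNE inequality through the inner iterates $y_k^0,\ldots,y_k^m$ to get the descent estimate directly, and you re-derive the composition-level fixed-point closedness by hand via the layer-by-layer induction $y_{k_j}^{i-1}\to x^*$. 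What the paper's route buys is brevity; what yours buys is a self-contained proof that makes visible exactly where each hypothesis enters — in particular, your remark that fixed-point closedness of $P_{f_i}$ must be applied along the inner sequence $\{y_{k_j}^{i-1}\}_j$ rather than along $\{x_{k_j}\}_j$ is precisely the content hidden inside the cited composition theorem. The only hypotheses you should state explicitly are that $\bigcap_{i=1}^m S^{f_i}_{<,0}\neq\emptyset$ guarantees both that each $P_{f_i}$ is a cutter and fixed-point closed (you do note this) and that the common fixed point $z$ exists because $\bigcap_{i=1}^m S^{f_i}_{<,0}\subset\bigcap_{i=1}^m S^{f_i}_{\leq,0}=\bigcap_{i=1}^m\fix P_{f_i}$; with that, the argument is complete.
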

\begin{proof}Firstly, let us denote 
	\begin{equation*}
	T:=P_{f_m}P_{f_{m-1}}\cdots P_{f_2}P_{f_1}\text{,}
	\end{equation*}%
	where $P_{f_i}, i=1\ldots,m$ are defined by (\ref{SSPP}). Then, Algorithm \ref{algorithm} can be written in the form 
	\begin{equation*}
	x^{k+1}=Tx_{k}\text{.}
	\end{equation*}%
	Since  $P_{f_i}, i=1\ldots,m$ are cutters, each $P_{f_i}$ is nothing else than the $1$-SQNE \cite[Corollary 2.1.40]{C12}. Furthermore, since the intersection $\bigcap_{i=1}^mS^{f_i}_{\leq,0}\neq\emptyset$, we get that these operators have
	a common fixed point, which yields that the operator $T$ is also SQNE \cite[Theorem 2.1.48]{C12}. Moreover, $P_{f_i}$ are fixed-point closed, $i=1,\ldots,m$, the composition $T$
	is fixed-point closed \cite[Theorem 4.2]{C15}. Thus, the assumptions of \cite[Theorem 5.11.1]%
	{C12} are satisfied, and hence the sequence $x_{k}$ converges	to a point $x^{\ast }\in \bigcap_{i=1}^mS^{f_i}_{\leq,0}$.
\end{proof}

\section{Conclusion}

This paper introduced the so-called star subgradient projection operator and discussed their useful properties. We applied such operator for  solving the quasi-convex feasibility problem. In our opinion, this operator can be utilized when proving convergence result of another method likes the cyclic star subgradient methods and, moreover,  their properties should be investigated in the same way as the celebrated subgradient projection operator.

$^{1}$ (N. Nimana)\newline
\indent Department of Mathematics \newline
\indent Faculty of Science  \newline
\indent Khon Kaen University \newline
\indent Khon Kaen 40002, Thailand \newline
\indent E-mail address: nimitni@kku.ac.th

\vspace{0.5cm}
	$^{2}$ (N. Petrot)\newline
	\indent Department of Mathematics \newline
	\indent Faculty of Science  \newline
\indent 	Naresuan University \newline
	\indent Phitsanulok 65000, Thailand \newline
	\indent E-mail address: narinp@nu.ac.th

\end{document}